\renewcommand{\thefootnote}{\alph{footnote}}
\renewcommand*{\@fnsymbol}[1]{\ifcase#1\or1\else\@arabic{\numexpr#1-1\relax}\fi}
\newtheorem{theorem}{Theorem}[section]
\newtheorem{lemma}[theorem]{Lemma}
\newtheorem{conjecture}[theorem]{Conjecture}
\newtheorem{remark}[theorem]{Remark}
\newtheorem{proposition}[theorem]{Proposition}
\theoremstyle{remark}
\newcommand{\keywords}[1]{\vspace{0.5em}\noindent\textbf{Keywords:} #1}
\newcommand{\old}[1]{{{}}}
\def\C{\mathcal{C}}
\def\bfell{\bm{\ell}}
\def\R{\mathcal{R}}
\def\T{\mathcal{T}}
\def\longto{\longrightarrow}
\let\eps\varepsilon
\def\Thanks#1{\gdef\thefootnote{\arabic{footnote}}\thanks{#1}}
\newenvironment{Proof}[1]{
  \par\vspace{1em}
  \noindent{\bf Proof#1. }\quad
}{
  \par
}
\title
{A cop--robber game on metric graphs}
\author{D.~Berend\Thanks{Institute for the Theory of Computing and Department of Mathematics, Ben-Gurion University, Beer Sheva 84105, Israel. E-mail: berend@bgu.ac.il}
\and
M. D. Boshernitzan\Thanks{Deceased; formerly of Department of Mathematics,
Rice University, Houston, TX 77251, USA}}
\begin{document}

\maketitle
\renewcommand{\thefootnote}{}

\footnote{2020 Mathematics subject classification: 
Primary 05C57; Secondary 05C12, 91A44, 49N75.}

\begin{abstract} 
We study a variant of the classical cop--robber game played on compact metric graphs, where each edge is assigned a positive length and identified with a real interval of corresponding length. In this setting, both the cop and the robber move continuously along the edges, subject to upper bounds on their speeds. The cop has no knowledge of the robber’s location and must choose a continuous path through the graph that is guaranteed to intersect the robber’s trajectory at some point in time. We show that for every compact metric graph, there exists a constant \( s > 0 \) such that if the cop’s speed exceeds \( s \) times the robber’s speed, then the cop can guarantee capture.
\end{abstract}

\keywords{cops and robbers; pursuit--evasion games; metric graphs; differential games; optimal strategies; critical speed; pursuit on networks}

\setcounter{section}{0}

\section{Introduction}
\setcounter{equation}{0}
\medskip

The cop--robber game, a pursuit--evasion problem on graphs, originated in
the late 1970s in the work of Quilliot~\cite{quilliot-thesis,quilliot}
and independently by Nowakowski and Winkler~\cite{nowakowski}. It was further
developed by Aigner and Fromme~\cite{aigner} and popularized through
connections to recreational mathematics, notably in the columns by Martin
Gardner~\cite{gardner1983,gardner1994}. Earlier conceptual foundations
of pursuit and evasion can be found in Ryll-Nardzewski’s formal
treatment~\cite{ryll1962} and Isaacs’ theory of differential
games~\cite{isaacs}. Over time, the game has inspired a wide range of
combinatorial, algorithmic, and geometric investigations.

Many variants of the game have been studied, involving multiple cops~\cite{bonato_nowakowski_book},
randomization~\cite{alspach}, restricted visibility~\cite{seager}, and partial
information~\cite{isler2008role}. A central quantity in these settings is
the \emph{cop number}, the minimum number of cops needed to guarantee capture.
Most of this theory is discrete: players occupy vertices and move along
edges in discrete time. In contrast, pursuit--evasion in Euclidean domains
and continuous environments has long been a subject of differential
games, yet formal study on \emph{metric graphs}, structures combining
combinatorial branching with continuous motion, remains limited.

In this paper we study a continuous-time cop--robber game on a compact
metric graph \(G\). Each edge \(e\) is assigned a positive length
\(L_e>0\) and identified with an interval \([0,L_e]\); the graph is
equipped with its intrinsic shortest-path metric. Both players move
continuously along edges and are constrained only by upper bounds on
their speeds. We normalize the robber's maximal speed to \(1\); the
cop's maximal speed is then a parameter \(s>0\).

A \emph{strategy} for the cop is a continuous \(s\)-Lipschitz path
\(f:[0,T]\to G\); a \emph{strategy} for the robber is a continuous
\(1\)-Lipschitz path \(g:[0,T]\to G\). We say that the cop's strategy
\(f\) is \emph{winning} if for every admissible robber strategy \(g\)
there exists \(t\in[0,T]\) with \(f(t)=g(t)\) (in which case we say the
cop \emph{captures} the robber by time \(T\)). Since the cop receives no information about the robber's position during
play, his strategy must be a fixed trajectory chosen in advance; in
particular, it cannot depend on the robber's motion. Our model does not
impose acceleration bounds: players may change direction
instantaneously, which is naturally represented by the Lipschitz
condition.

A basic observation is that capture is not always possible. For
example, on a cycle the robber can avoid capture indefinitely when
\(s\le 1\), whereas on a path any \(s>0\) suffices for eventual capture.

Clearly, if for some speed $s$ there exists a winning strategy, then the same strategy is winning for every greater speed. The {\it critical speed} for $G$ is the infimum of all speeds for which there exists a winning strategy, and is denoted by $s^*(G)$. More precisely, we will distinguish between two situations, depending on whether the infimum is achieved or not; if the infimum is $s_0$ and for $s_0$ itself there exists a winning strategy, we denote the critical speed by $s_0$, and otherwise, by $s_0^+$. (We mention in passing that we do not know of a case where the first situation occurs, except for a one-point $G$ for which $s^*=0$.) For example, referring to the two trivial examples above, the critical speed of a path is $0^+$ and that of a cyclic graph is $1^+$.

In Section~\ref{sec:main-results} we present our main results: the
finiteness of the critical speed on every compact metric graph
(Theorem~\ref{finiteness-of-critical-speed}), upper bounds for certain
families such as star graphs (Theorem~\ref{critical-speed-of-star}) and
comb graphs (Proposition~\ref{complex-graph-with-small-critical-speed}),
and structural theorems about the existence and properties of optimal
strategies (Theorems~\ref{optimal-strategy-exists} and~\ref{max-speed}).
Section~\ref{sec:proofs} contains all proofs and supporting lemmas.

\section{The Main Results}\label{sec:main-results}
\medskip

We work throughout with compact metric graphs. Loops and multiple edges
do not introduce additional generality: any metric multigraph can be
converted into an equivalent metric graph by subdividing parallel edges
and loops with intermediate vertices. Thus it suffices to restrict
attention to metric graphs in the usual sense.

Moreover, if the graph is disconnected, then no winning strategy exists.
We therefore assume throughout that the underlying metric graph is
connected.

Our first result shows that, on any metric graph, C can catch R if $s$ is large enough.

\begin{theorem} \label{finiteness-of-critical-speed}
The critical speed of every metric graph is finite.
\end{theorem}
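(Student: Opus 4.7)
My plan is to establish the result for trees first and then extend to general graphs by patrolling the non-tree edges. For a compact metric tree $T$, I would have the cop execute a DFS-style sweep: root $T$ at a vertex and traverse an Euler tour of the doubled tree at speed $s$. On each edge being swept, the signed gap $\phi(t) = f(t) - g(t)$ between cop and robber, measured as an offset along that edge, grows at rate at least $s - 1$ whenever both lie on the edge, so an intermediate-value argument forces capture if the robber stays on the edge during the sweep. For $s$ large, the robber's only recourse---escaping through a branching vertex before the sweep front overtakes him---is limited, since the outward sweep outruns the robber's retreat. Iterating the DFS through the tree, one shows that for sufficiently large $s$, every robber trajectory is eventually crossed, giving a finite tree-winning speed $s_T$.

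For a general compact metric graph $G$, fix a spanning tree $T \subseteq G$ and let $e_1, \dots, e_b$ be the finitely many non-tree edges. The cop's strategy on $G$ is to execute the tree sweep on $T$, punctuated by periodic short detours: at regular times during the sweep, the cop visits each $e_i$ in turn (walking to an endpoint of $e_i$, traversing $e_i$, and returning to the sweep). If $s$ is sufficiently large, the detours contribute only a small fraction of the cop's motion, so the tree sweep still catches any robber confined to $T$. Meanwhile, each $e_i$ is patrolled frequently enough that a robber spending time on $e_i$ is intercepted, since between two successive patrols of $e_i$ the robber cannot traverse the edge without meeting the cop.

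The main obstacle is coordinating the tree sweep with the detours so that the robber cannot exploit a detour window to slip past the cop through a branching vertex, escaping from an uncleared subtree into a region the sweep has already passed. Handling this requires the detour schedule to be fine enough and the speed $s$ to exceed both (i) the tree-winning speed $s_T$ and (ii) a margin depending on $b$ and the total length of the non-tree edges, so that the robber's displacement during any single detour is strictly less than the ``slack'' in the sweep's catching mechanism. The resulting bound on the critical speed is far from sharp, but it is finite, which is all that is required.
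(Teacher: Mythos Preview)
Your tree argument has a genuine gap: an Euler-tour/DFS sweep of a tree does not guarantee capture, no matter how large $s$ is. On the $3$-star with unit edges, let the cop cycle through $Ov_1, Ov_2, Ov_3, Ov_1,\ldots$ at speed $s$, returning to $O$ at each time $2k/s$. A robber who at time $2k/s$ sits at depth $1/s$ on the one edge the cop is neither entering now nor has just left can, during the ensuing sweep, walk through $O$ into the edge the cop has just vacated and arrive there at depth exactly $1/s$ at time $2(k+1)/s$. He thus rotates around the star forever, never sharing an edge with the cop. Increasing $s$ only rescales this picture; the evasion persists for every $s$.

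The difficulty you flag only for the detour phase---the robber slipping through a branching vertex into already-swept territory---is in fact the central obstacle already on trees, indeed already on $S_3$, and your single-edge intermediate-value argument does nothing to prevent recontamination of cleared edges. This is exactly the point the paper isolates. Its remedy is not a sweep but a local \emph{pushing} mechanism at each vertex $v$: an infinite sequence of geometrically growing excursions into the incident edges (durations $\ldots,\lambda^{-2},\lambda^{-1},1$ with $\lambda>1$ determined by $s$ and $\deg v$) that provably forces the robber a definite distance $\eps_v>0$ away from $v$, however close he started. Only after every vertex carries such a secured ball does a single traversal of the graph finish the capture. Your plan is missing an idea of this kind, and without it even the tree case fails.
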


For certain classes of graphs, we can derive upper bounds on the critical speed. In particular, our next result establishes such a bound for \emph{star graphs}.  
Recall that, for \( k \ge 2 \), the \(k\)-star graph \(S_k\) is the tree consisting of one internal (central) node connected to \(k\) leaves; see Figure~\ref{fig:star}.

\begin{figure}[H]
\centering
\begin{tikzpicture}[scale=1.2, every node/.style={circle, draw, fill=white, inner sep=2pt}]
  \node (c) at (0,0) {};
  \foreach \i in {1,...,5} {
    \node (l\i) at ({1.5*cos(72*(\i-1))},{1.5*sin(72*(\i-1))}) {};
    \draw (c) -- (l\i);
  }
\end{tikzpicture}
\caption{The $k$-star graph $S_k$ (for $k=5$): a tree with one central node and $k$ leaves.}
\label{fig:star}
\end{figure}
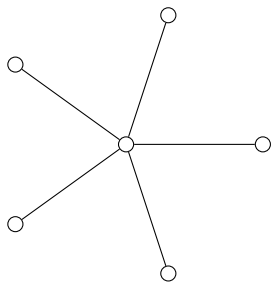

\begin{theorem} \label{critical-speed-of-star}
For every $k\ge 3$ and edge lengths $\bfell$:
$$s^*\left(S_k\right) \le (2k-3)^+.$$
\end{theorem}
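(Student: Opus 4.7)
The plan is to exhibit an explicit winning strategy for the cop when $s > 2k-3$ and analyze it through the evolution of the robber's \emph{possibility set}. After sorting the edges so that $\ell_1 \ge \ell_2 \ge \cdots \ge \ell_k$, the cop's first pass would follow the path
\[
c \to v_1 \to c \to v_2 \to c \to \cdots \to c \to v_{k-1} \to c \to v_k,
\]
i.e., a full round trip on each of the first $k-1$ edges followed by a one-way traversal to $v_k$; it takes time $T_1 = (2\sum_{i<k}\ell_i + \ell_k)/s$.

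For each edge $i$ and time $t$, I would define $R_i(t) \in [0,\ell_i]$ as the supremum of distances from $c$ on edge $i$ at which an uncaught robber could be located at time $t$. The analysis rests on three properties to be proved directly from the definitions: (a) a full round trip $c \to v_i \to c$ forces $R_i = 0$ at its completion, since any robber present on edge $i$ when the trip begins is captured during it, and any robber who enters edge $i$ through $c$ mid-trip must leave edge $i$ before the trip ends; (b) a one-way traversal $c \to v_k$ leaves $R_k \le \ell_k/s$, since a trailing robber on edge $k$ is limited by her speed deficit; (c) $R_i$ grows at rate at most $1$ whenever the cop is off edge $i$, capped at $\ell_i$.

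Combining these and summing the time between the cop's sweep of edge $j$ and the end of the first pass, for each $j < k$,
\[
R_j(T_1) \le \frac{2(\ell_{j+1}+\cdots+\ell_{k-1}) + \ell_k}{s} \le \frac{(2(k-1-j)+1)\,\ell_j}{s} \le \frac{(2k-3)\,\ell_j}{s},
\]
using $\ell_i \le \ell_j$ for $i > j$; likewise $R_k(T_1) \le \ell_k/s$. So for $s > 2k-3$ we obtain $R_j(T_1)/\ell_j < 1$ for every edge. The cop then iterates the strategy with \emph{adaptive} sweep depths: in pass $n+1$ the cop sweeps each edge $j$ only as deep as the current value of $R_j$, which is a deterministic function of the cop's strategy so far and hence known in advance. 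The same estimate gives a contraction
\[
\max_j \frac{R_j(T_{n+1})}{\ell_j} \le \frac{2k-3}{s} \cdot \max_j \frac{R_j(T_n)}{\ell_j},
\]
and since $(2k-3)/s < 1$, the ratios $R_j/\ell_j$ shrink geometrically to zero. The successive pass lengths $T_{n+1} - T_n$ also shrink geometrically, so their sum is finite, and at the limiting time the possibility set is empty.

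The main obstacle is proving property~(a): a robber who enters edge $i$ through $c$ at an intermediate time during the cop's round trip is initially ``behind'' the cop, and I would need to show she cannot remain on edge $i$ until the end of the round trip. This follows from an intermediate-value-theorem argument applied to the signed distance between the cop and the robber along edge $i$ during the cop's return phase, forcing a crossing (hence a capture) unless the robber first exits to $c$. Given (a)-(c), the rest of the argument is the bookkeeping for the evolution of $R_i(t)$ through the sweep sequence and the contraction step above.
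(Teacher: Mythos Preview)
Your first-pass analysis is sound, and property~(a) does hold for \emph{full} round trips to a leaf $v_j$ (the robber cannot escape past $v_j$, so the IVT argument goes through). The gap is in the iterative step: property~(a) fails for an adaptive round trip to depth $D<\ell_j$. A robber on edge~$j$ at position $p$ with $D(s-1)/s < p \le R_j$ can run toward $v_j$ at full speed during the cop's outward leg and be strictly beyond the turn-around point $D$ when the cop reaches it (since $p + D/s > D$); she then stays ahead of the cop throughout the return leg and ends the sweep still on edge~$j$ at a positive distance from~$c$. So after your adaptive sweep the value $R_j$ is not~$0$ --- in fact the supremum can only have grown, to as much as $R_j + 2D/s$ --- while what the sweep actually accomplishes is to clear the initial segment $[0,\,D(s-1)/s]$. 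Since your bookkeeping tracks only the supremum $R_j$ and not this cleared sub-interval, the contraction $\rho_{n+1}\le \frac{2k-3}{s}\,\rho_n$ has no justification. One can try to repair this by sweeping to depth $R_j\cdot s/(s-1)$, which does restore $R_j=0$, but the extra $s/(s-1)$ factor together with the time to return from $v_k$ between passes disrupts the $(2k-3)/s$ arithmetic, and it is not clear that the bound $(2k-3)^+$ survives.

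The paper takes the dual route: rather than shrinking the possibility set from the outside in, it grows the \emph{cleared zone} near the center outward. After one edge is fully cleared, the cop makes cyclic excursions into the remaining $k-1$ edges with geometrically \emph{increasing} durations, by a factor $\lambda>1$ determined by $\lambda+\lambda^2+\cdots+\lambda^{k-2}=(s-1)/2$ (and $\lambda>1$ precisely when $s>2k-3$). Each excursion shrinks the cleared zones on the untouched edges by the excursion time, but creates a proportionally larger cleared zone on the edge just visited; the net effect is growth by~$\lambda$ per cycle. The infinitesimal initial cleared zones are handled by running this geometric schedule backward from a finite time, using only that the robber is at \emph{some} positive distance from the center once the first edge is cleared. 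Tracking the inner cleared radius rather than the outer supremum is exactly what makes the partial excursions effective, and is the missing ingredient in your scheme.
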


As critical speeds are of either of the two forms $s$ or $s^+$ for some
non-negative $s$, we need to explain what the last inequality between
critical speeds means. Given numbers $s_2>s_1\ge 0$, we agree that
$s_1<s_1^+<s_2$.

In the cases $k=1,2$, the graph is a path, so
that $s^*\left(S_k\right)=0^+$.

\begin{conjecture}
For every $k\ge 3$ and edge lengths $\bfell$:
$$s^*\left(S_k\right) = (2k-3)^+.$$
\end{conjecture}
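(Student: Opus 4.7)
The plan is to establish the matching lower bound $s^*(S_k)\ge(2k-3)^+$; combined with Theorem~\ref{critical-speed-of-star}, this yields the conjecture. Explicitly, for every $s\le 2k-3$ and every $s$-Lipschitz cop trajectory $f$, I would construct a $1$-Lipschitz robber strategy $g$ with $g(t)\ne f(t)$ for all $t$.

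The natural framework is the robber's reachable set $R(t)\subset S_k$, defined as the set of positions $p$ admitting a $1$-Lipschitz path $g\colon[0,t]\to S_k$ with $g(t)=p$ and $g(\tau)\ne f(\tau)$ for all $\tau\in[0,t]$. The robber has a winning response to $f$ if and only if $R(t)\ne\emptyset$ for all $t$, so it suffices to control the evolution of $R(t)$. On $S_k$, $R(t)$ decomposes into a central \emph{reservoir} (containing the center and reaching into the $k-1$ edges currently free of the cop) together with at most one detached \emph{leaf cap} isolated on the leaf side of the cop on his current edge. The cop influences $R(t)$ in three distinct ways: while advancing outward along his edge he shrinks the leaf cap at rate $s$; while retreating toward the center he lets the cap regrow; and at each transit through the center the old leaf cap is reabsorbed into the reservoir while the newly entered edge is instantly detached into a fresh cap.

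The technical heart of the proof should be a potential function $\Phi(t)$ encoding the usable capacity of $R(t)$, plausibly a weighted combination of the reservoir length, anchor buffers on the leaves of the free edges, and reabsorbable cap lengths. Between cop edge-switches, $\Phi$ grows at a rate governed by the $1$-Lipschitz outward spread along the $k-1$ free edges, while the cop erodes $\Phi$ at a rate tied to $s$ on his current edge; at each edge-switch, $\Phi$ jumps by an amount controlled by the trade-off between the absorbed old cap and the newly formed one. The sharp constant $2k-3$ should emerge from an amortized comparison over a complete cop excursion (out to a leaf, back, and transit onto the next edge): the total potential loss must be charged against the free-edge growth over the same interval, producing the inequality $s\le 2(k-1)-1=2k-3$, with the marginal nature at equality explaining why the critical speed is $(2k-3)^+$ rather than $2k-3$.

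The main obstacle is making this amortization work across the discontinuous edge-switch events: the potential can jump sharply in either direction at a transit, and a naive pointwise-in-time analysis yields only the weaker bound $s<k$. The sharp factor of $2$ in $2k-3$ must come from pairing both the outward and inward phases of each cop excursion with their corresponding free-edge contributions, via a charging scheme that tracks the provenance of each unit of reservoir length back to an earlier unit of free-edge growth. A secondary difficulty is handling the general edge-length vector $\bfell$ asserted in the conjecture: short edges saturate their contribution quickly and must be weighted appropriately so that the critical constant remains $2k-3$ uniformly in $\bfell$.
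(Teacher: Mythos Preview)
The statement you are addressing is labeled a \emph{Conjecture} in the paper, not a theorem; the paper offers no proof and presents it as open. So there is no paper proof to compare your attempt against.

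As for the proposal itself: it is a research outline, not a proof. You never actually define the potential $\Phi$; you write that it is ``plausibly a weighted combination'' of various quantities and that the sharp constant ``should emerge'' from an amortization over a full excursion. You then explicitly name the step you have not carried out: handling the discontinuous jumps of $\Phi$ at edge-switches, which you say defeats a naive analysis and requires a charging scheme you have not built. That is precisely the content of the lower bound, so at present the argument has no force. The reachable-set framework and the idea of tracking a reservoir plus a detached leaf cap are sound starting points, and your identification of where the factor of $2$ in $2k-3$ must come from (pairing the outward and return legs of each cop excursion) is the right intuition. But until you write down a concrete $\Phi$, prove the differential inequality between transits, and prove the jump inequality at each transit---with constants that actually sum to $2k-3$ rather than something weaker---you have a plan, not a proof. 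The secondary issue you flag (uniformity in $\bfell$) is real but is likely the easier of the two, since Lemma~\ref{shorten-edge} and Lemma~\ref{constant-factor} already give some monotonicity in the leaf lengths.
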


Theorem~\ref{critical-speed-of-star} suggests that the critical speed of a graph
may grow with its ``complexity''.  We use this term here in a purely intuitive
sense.  Informally, large critical speed appears to be associated with the
presence of vertices of high degree, and in particular with clusters of such
vertices that are close to one another.  We do not attempt to formalize this
intuition.

With this in mind, consider the following family of graphs.
Denote by $B_k$ the \emph{comb graph} consisting of a path of length $k$, with
each vertex on the path having a single leaf attached; see
Figure~\ref{fig:comb}.
(This graph is also known as a \emph{centipede graph}; see~\cite{wolframCentipedeGraph}.)

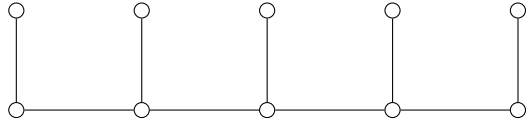
\begin{figure}[H]
\centering
\begin{tikzpicture}[scale=1.1, every node/.style={circle, draw, fill=white, inner sep=2pt}]
  \foreach \i in {0,...,4} {
    \node (p\i) at (\i*1.5, 0) {};
    \ifnum\i>0
      \pgfmathtruncatemacro{\j}{\i-1}
      \draw (p\j) -- (p\i);
    \fi
  }

  \foreach \i in {0,...,4} {
    \node (l\i) at (\i*1.5, 1.2) {};
    \draw (p\i) -- (l\i);
  }
\end{tikzpicture}
\caption{The graph \(B_k\) (for \(k=5\)): a path of \(k\) vertices, each with a leaf attached.}
\label{fig:comb}
\end{figure}

\begin{proposition} \label{complex-graph-with-small-critical-speed}
For $k\ge 3$, there exist distances $\bfell$ on $B_k$, such that
$s^*(B_k,\bfell) \le 3^+$.
\end{proposition}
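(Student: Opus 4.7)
The plan is to fix lengths making the cop's job easy and describe a sweep strategy for speed $s > 3$. I would take all spine edges of a common length $L$ and all leaf edges of a common small length $b$, with $b \ll L$; the exact relation (say $L \ge 3b$) will be tuned in the analysis. The cop's strategy is a round-trip sweep: starting at the tip of $l_0$, descend to $p_0$; then for $i = 1, \ldots, k-1$, traverse $p_{i-1} \to p_i$, round-trip to $l_i$, and (for $i < k-1$) perform a short \emph{backup} excursion $p_i \to p_i - b \to p_i$ before proceeding; at $p_{k-1}$, after visiting $l_{k-1}$, return along the spine to $p_0$, and finally ascend $l_0$ to the tip. The final leaf-sweep closes the one escape route available to a robber trying to hide on $l_0$.

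I would prove the strategy wins by induction on $i$, establishing at the end of stage $i$ the invariant that every uncaught robber has spine-projected position strictly greater than $p_i$ and is therefore confined to the subtree $T_i = (p_i, p_{k-1}] \cup \{l_j : j \ge i+1\}$. The base case (the cop's descent along $l_0$) uses an intermediate-value argument: the cop's path covers every leaf point exactly once in $[0, b/s]$, so any robber staying on $l_0$ across this window must coincide with the cop. For the inductive step, a robber surviving into stage $i$ has three escape attempts: (i) linger on $[p_{i-1}, p_i]$, caught by the forward transit at closure rate $s-1 > 2$; (ii) remain on $l_i$, caught by the same IVT argument; (iii) start at $p_i + \epsilon$ with $\epsilon < 2b/s$ and slip leftward across $p_i$ during the leaf visit, then caught by the subsequent backup.

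The quantitative heart of the argument, and the source of the constant $3$, is case (iii). A robber at $p_i + \epsilon$ with $\epsilon < 2b/s$ crosses $p_i$ at time $\epsilon$; by the end of the leaf visit it has position $p_i - (2b/s - \epsilon)$. During the backup the cop moves leftward at speed $s$ and, against a leftward-fleeing robber, closes the gap at rate $s - 1$, meeting the robber at time $\tau = (2b/s - \epsilon)/(s-1)$ at spine position $p_i - s\tau$. For the meeting point to lie in the cop's backup reach $[p_i - b, p_i]$, the worst case $\epsilon \to 0$ yields the condition $2b/(s-1) \le b$, i.e., $s \ge 3$; strict inequality $s > 3$ gives the winning margin and therefore $s^*(B_k, \bfell) \le 3^+$.

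The main obstacle is excluding robbers who try to hide on a previously cleared leaf $l_j$ with $j \le i$. This is ruled out by the inductive invariant itself — after stage $j$ no uncaught robber has spine position $\le p_j$ — but the reason the invariant is even maintained is precisely the backup analysis of case (iii). The terminal sweep of $l_0$ after the return leg closes the only residual escape route: a robber who, during the initial descent, manages to enter $l_0$ from the spine side (possible for robbers starting at $p_0 + \epsilon$ with $\epsilon > b/s$) is caught by the final intermediate-value sweep, since the robber's trajectory and the cop's terminal sweep of $l_0$ jointly satisfy the hypotheses of IVT over the full leaf interval, provided the cop pauses briefly at $p_0$ before the final ascent so that the boundary value $h(0) \ge 0$ holds.
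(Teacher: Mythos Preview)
Your inductive step has a genuine gap: the three-case analysis misses a fourth escape route. During the backup excursion $p_i\to p_i-b\to p_i$ the cop is on the spine strictly to the left of $p_i$, and a robber sitting just to the right can slip not onto the spine but onto the \emph{leaf} $l_i$. Concretely, let the robber start at $p_i+\delta$ for some small $\delta>0$ and stay there through Phase~1 (the cop never passes $p_i$) and Phase~2 (the cop is on $l_i$, the robber on the spine). At the start of Phase~3 the robber moves left, reaches $p_i$ at time $\delta<b/s$ while the cop is at $p_i-s\delta$ on the spine, and then ascends $l_i$. At the end of Phase~3 the robber is on $l_i$ at height $2b/s-\delta>0$, the cop is at $p_i$, and your invariant ``spine-projected position $>p_i$'' fails. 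Since your strategy never revisits $l_i$ after stage~$i$ (the return trip is along the spine and only $l_0$ is swept at the end), this robber is never caught.

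This is not a patchable oversight. Any finite sequence of backups at $p_i$ suffers the same defect: whenever the cop leaves $p_i$ to check one of the three incident directions, a robber poised in another direction can cross $p_i$ into the third. The paper's proof handles this by invoking, at each backbone vertex $v_i$, the \emph{infinite} geometric schedule from the $3$-star argument: with one incident edge already cleared, the cop alternates excursions into the two suspected edges with durations $\ldots,\lambda^{-2},\lambda^{-1},1$ where $\lambda=(s-1)/2>1$. It is precisely this infinite alternation that prevents the robber from ever reaching $v_i$; a single (or any finite number of) backups cannot achieve this, and your calculation of the constant $3$ via case~(iii), though arithmetically correct for that one scenario, does not establish the invariant.
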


An \emph{optimal strategy} for the cop is one that guarantees capture of the robber in the least possible time, regardless of the robber’s actions.

\begin{theorem} \label{optimal-strategy-exists}
Every metric graph admits an optimal strategy.
\end{theorem}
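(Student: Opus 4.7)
The plan is an Arzelà–Ascoli compactness argument on the space of cop strategies at a fixed speed. I would fix a speed $s$ large enough that some winning strategy exists (guaranteed by Theorem~\ref{finiteness-of-critical-speed}) and set
\[
T^* \;:=\; \inf\bigl\{T\ge 0 : \text{there exists an } s\text{-Lipschitz winning strategy } f:[0,T]\to G\bigr\}.
\]
I would then take a sequence of winning strategies $f_n:[0,T_n]\to G$ with $T_n\searrow T^*$, and extend each one to the common interval $[0,T_1]$ by freezing it after time $T_n$, which preserves the $s$-Lipschitz property. Since $G$ is compact and the extended family is uniformly $s$-Lipschitz (hence equicontinuous), the Arzelà–Ascoli theorem yields a subsequence converging uniformly to an $s$-Lipschitz limit $f^{*}:[0,T_1]\to G$; its restriction to $[0,T^*]$ is the candidate optimal strategy.

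To verify that this restriction is winning, I would take an arbitrary $1$-Lipschitz robber path $g:[0,T^*]\to G$ and extend it to $[0,T_1]$ by $g(t):=g(T^*)$ for $t>T^*$, which keeps it $1$-Lipschitz. Each $f_n$ captures the extended $g$, so there exists a capture time $t_n\in[0,T_n]$ with $f_n(t_n)=g(t_n)$. Passing to a convergent sub-subsequence $t_n\to t^*$, and using $t_n\le T_n\to T^*$, forces $t^*\in[0,T^*]$. Uniform convergence $f_n\to f^{*}$ together with continuity of $g$ then yields $f^{*}(t^*)=g(t^*)$, so the cop captures the robber by time $T^*$, which is thus attained and therefore minimal.

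The main obstacle is justifying the double limit $f_n(t_n)\to f^{*}(t^*)$: it is here that uniform, rather than merely pointwise, convergence is essential, and this is precisely what Arzelà–Ascoli supplies via equicontinuity. The remaining subtleties---that $t^{*}\le T^*$ because $T_n$ approaches $T^*$ from above, that Lipschitz constants survive uniform limits, and that both $f_n$ and $g$ can be safely prolonged to a common time interval without violating the speed constraints---are routine, and I would dispatch them in a single paragraph before turning to the compactness step.
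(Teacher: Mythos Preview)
Your proposal is correct and follows essentially the same compactness argument as the paper: take a sequence of winning strategies with durations decreasing to the infimum, extract a convergent subsequence via equicontinuity, and verify that the limit is still winning by passing to the limit in the capture times. Your version is in fact a bit more careful than the paper's on the technicalities---you explicitly extend the strategies and the robber path to a common time interval and invoke Arzel\`a--Ascoli for uniform convergence, whereas the paper works with pointwise convergence and appeals directly to the uniform Lipschitz bound to justify the double limit $f_n(t_n)\to f(t_0)$---but the underlying idea is identical.
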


We may express the theorem alternatively as follows. Denote by $\T$ the set of all $T > 0$ for which there exists a winning strategy $f : [0, T] \to X_G$. Clearly, if $T_1 \in \T$ and $T_2 > T_1$, then $T_2 \in \T$, so $\T$ is either of the form $[T_0, \infty)$ or $(T_0, \infty)$. The theorem asserts that the former always holds.

\begin{remark}
It is possible that an optimal strategy of \(C\) will intersect every strategy of \(R\) strictly before time \(T_0\). For example, consider the circle of length 1 with \(s=2\). An optimal strategy for \(C\) is to move clockwise (say) at full speed. Even if \(R\) starts just behind \(C\) (relative to \(C\)'s direction), capture occurs strictly before \(T_0 = 1\). However, no strategy guarantees capture uniformly by time \(1 - \varepsilon\) for any \(\varepsilon > 0\).
\end{remark}

Intuitively, there is no reason for C ever to move at less than the maximal speed~$s$. Since C’s strategy may involve (and, as
will be seen in the proof of Theorem
\ref{critical-speed-of-star}, typically does) infinitely many changes in direction, it is not immediately clear how to formalize this notion of moving at full speed. We adopt the interpretation that C moves at maximal speed if the total variation $V_f$ of the strategy $f : [0,T] \to X_G$ satisfies $V_f = sT$. Here, the \emph{total variation} of $f$ is defined by
\[
V_f := \sup \left\{ \sum_{i=1}^n d_G(f(t_{i-1}), f(t_{i})) \,:\, 0 = t_0 < t_1 < \cdots < t_n = T \right\},
\]
where \( d_G \) denotes the intrinsic (shortest-path) metric on the graph \( G \). The
following theorem establishes a strong version of
that intuition.

\begin{theorem} \label{max-speed}
In every optimal strategy for a metric graph, C moves throughout the hunt at maximal speed.
\end{theorem}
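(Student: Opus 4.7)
The plan is to argue by contradiction using a time-reparameterization in arc length. Suppose $f:[0,T_0]\to X_G$ is an optimal strategy, as guaranteed by Theorem~\ref{optimal-strategy-exists}, and suppose $V_f < s T_0$. I would then construct a winning strategy $\tilde f:[0,T'_0]\to X_G$ with $T'_0 := V_f/s < T_0$, contradicting optimality.

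To build $\tilde f$, first define $L:[0,T_0]\to[0,V_f]$ by letting $L(t)$ be the total variation of $f$ restricted to $[0,t]$. Since $f$ is $s$-Lipschitz, $L$ is continuous, nondecreasing, $s$-Lipschitz, and satisfies $L(T_0)=V_f$. On every maximal interval on which $L$ is constant, $f$ itself is constant, so the prescription $\tilde f(\tau) := f(t)$ for any $t\in L^{-1}(\{s\tau\})$ yields an unambiguous map $\tilde f:[0,T'_0]\to X_G$. The estimate $d_G(f(t_1),f(t_2)) \le L(t_2)-L(t_1)$ (for $t_1\le t_2$) then gives $d_G(\tilde f(\tau_1),\tilde f(\tau_2)) \le s|\tau_1-\tau_2|$, so $\tilde f$ is a legal cop strategy moving at speed exactly $s$ in the sense that $V_{\tilde f} = sT'_0$.

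Next I would check that $\tilde f$ is winning on $[0,T'_0]$. Given an arbitrary $1$-Lipschitz $g:[0,T'_0]\to X_G$, set $\hat g(t) := g(L(t)/s)$ on $[0,T_0]$. Because $L/s$ is $1$-Lipschitz and $g$ is $1$-Lipschitz, $\hat g$ is a valid robber strategy against $f$. By the winning property of $f$, some $t^*\in[0,T_0]$ satisfies $f(t^*)=\hat g(t^*)$; setting $\tau^* := L(t^*)/s\in[0,T'_0]$ gives $\tilde f(\tau^*)=f(t^*)=g(\tau^*)$. Hence $\tilde f$ captures every robber by time $T'_0 < T_0$, contradicting optimality, and therefore $V_f = sT_0$.

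The main technical obstacle is the careful handling of the arc-length reparameterization on intervals where $L$ is constant, that is, where the cop stands still or traces a zero-variation segment. One must verify that $\tilde f$ is well-defined and continuous through such intervals, and, symmetrically, that composing $g$ with the (possibly nondifferentiable, merely $1$-Lipschitz) map $L/s$ still yields a $1$-Lipschitz robber strategy. Both facts rest ultimately on the absolute continuity of $f$ and on additivity of the total variation; once these are established, the rest of the argument is formal.
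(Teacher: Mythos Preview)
Your proposal is correct and follows essentially the same route as the paper: reparameterize $f$ by its arc-length function $L(t)=V_f(t)$ to obtain a shorter strategy $\tilde f$ on $[0,V_f/s]$, note that $\tilde f$ is well-defined on the flat parts of $L$ because $f$ is constant there, and verify $\tilde f$ is winning by pulling any robber strategy $g$ back to $\hat g=g\circ(L/s)$. The only quibble is your final remark that the argument rests on ``absolute continuity of $f$''---in fact only the Lipschitz condition and the additivity of total variation are used.
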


\begin{remark}
From the theorem it follows that in any optimal strategy, the cop spends zero total time at each point. 
This does not preclude the cop from being at a given vertex at an uncountable set of times (for example, a Cantor set). 
It is an open question whether every optimal strategy can be realized in a more regular form: 
\begin{enumerate}
\item the cop is at each vertex only countably many times, and
\item on each connected component of the complement of these times (the interim intervals), the cop either moves at top speed from one vertex to another or moves at top speed from a vertex along some edge and back.
\end{enumerate}
\end{remark}

\section{Structural Lemmas}\label{sec:structural-lemmas}
\medskip

\begin{lemma} \label{shorten-edge}
Let \(G\) be a metric graph, and let \(G'\) be the graph obtained from \(G\)
by shortening an edge that is incident to a leaf vertex
(see Figure \ref{fig:shorten-edge}).
Then 
\[
s^*(G') \le s^*(G).
\]
\end{lemma}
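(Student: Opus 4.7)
The plan is to transfer any winning cop strategy on $G$ to a winning cop strategy on $G'$ at the same speed, via a pair of natural maps between the two graphs. I will construct a $1$-Lipschitz retraction $\pi:G\to G'$, which folds the excess ``tip'' of the shortened leaf edge onto the new leaf, together with an isometric embedding $\iota:G'\hookrightarrow G$ identifying $G'$ with the subgraph of $G$ obtained by removing that tip. These will satisfy the key compatibility $\pi\circ\iota=\mathrm{id}_{G'}$.

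Concretely, let $e$ be the leaf edge of $G$ with internal endpoint $u$ and leaf endpoint $v$, parameterized as $[0,L]$ from $u$, and let $e'\subset G'$ be the corresponding edge of length $L'<L$, parameterized as $[0,L']$. Identifying $G\setminus e$ with $G'\setminus e'$, set $\pi$ to be the identity off $e$ and, on $e$, define $\pi(x)=\min(x,L')$, viewing the codomain as a point of $e'$. Define $\iota$ to be the identity off $e'$ and to send a point of $e'$ at distance $y$ from $u$ to the point of $e$ at the same distance. The fact that $e$ is a leaf edge forces every shortest path from outside $e$ into $e$ to enter through $u$; this makes the checks that $\pi$ is $1$-Lipschitz, that $\iota$ is isometric, and that $\pi\circ\iota=\mathrm{id}_{G'}$ routine.

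Given any speed $s$ for which a winning cop strategy $f:[0,T]\to G$ exists on $G$, set $f':=\pi\circ f$. Continuity and the $1$-Lipschitz property of $\pi$ make $f'$ a valid $s$-Lipschitz cop strategy on $G'$. For any $1$-Lipschitz robber strategy $g':[0,T]\to G'$, the lift $g:=\iota\circ g'$ is a $1$-Lipschitz path in $G$, so by hypothesis $f$ captures it: $f(t)=\iota(g'(t))$ for some $t\in[0,T]$. Applying $\pi$ and using $\pi\circ\iota=\mathrm{id}_{G'}$ gives $f'(t)=g'(t)$, so $f'$ captures $g'$. This transfer works at every speed $s$ for which a winning strategy exists on $G$, yielding $s^*(G')\le s^*(G)$ under the stated convention $s_1<s_1^+<s_2$. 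The only substantive point is the existence of a $1$-Lipschitz retraction $\pi$ with $\pi\circ\iota=\mathrm{id}$; the leaf hypothesis is exactly what makes this possible, since there is a canonical direction (toward the removed tip) along which to fold, and for an interior edge the argument could not be adapted directly.
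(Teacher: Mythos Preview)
Your proof is correct and is essentially the same as the paper's: your retraction $\pi$ is exactly the clipping map the paper uses to define $f'$ from $f$, and your isometric embedding $\iota$ is what the paper invokes implicitly when it says that any robber strategy $g'$ on $G'$ ``is also a valid strategy in $G$'' (using that $v_2$ is a leaf). The only difference is packaging---you name the maps and the identity $\pi\circ\iota=\mathrm{id}_{G'}$ explicitly, whereas the paper works coordinatewise.
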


\begin{figure}[H]
\centering
\begin{tikzpicture}[scale=1, every node/.style={circle, draw, fill=white, inner sep=2pt}]
  \draw[thick] (-1,-1.5) rectangle (3.5,1.7);
  \node[draw=none, fill=none] at (1.25,2.0) {$G$};

  \node (a) at (0,0) [label={[label distance=1mm]below:{$v_5$}}] {};
  \node (b) at (0.7,0.7) [label=above:{$v_3$}] {};
  \node (c) at (0.7,-0.7) [label=below:{$v_4$}] {};
  \node (d) at (1.4,0) [label={[label distance=1mm]below:{$v_1$}}] {};
  \node (e) at (2.8,0) [label={[label distance=1mm]below:{$v_2$}}] {};

  \draw (a) -- (b);
  \draw (a) -- (c);
  \draw (a) -- (d);
  \draw (b) -- (d);
  \draw (d) -- (e);

  \draw[thick,->,>=stealth,line width=1pt] (3.7,0) -- (4.8,0);

  \draw[thick] (5,-1.5) rectangle (8.5,1.7);
  \node[draw=none, fill=none] at (6.75,2.0) {$G'$};

\node (a') at (5.8,0)  [label={[label distance=2.5mm]below:{$v_5$}}] {};
\node (b') at (6.5,0.7) [label=above:{$v_3$}] {};
\node (c') at (6.5,-0.7) [label=below:{$v_4$}] {};
\node (d') at (7.2,0)  [label={[label distance=2.5mm]below:{$v_1$}}] {};
\node (e') at (7.7,0)  [label={[label distance=1.5mm]below:{$v_2'$}}, inner sep=1.5pt] {};

\draw (a') -- (b');
\draw (a') -- (c');
\draw (a') -- (d');
\draw (b') -- (d');
\draw (d') -- (e');

\end{tikzpicture}
\caption{Graphs \(G\) and \(G'\), where \(G'\) is obtained from \(G\) by shortening the edge from \(v_1\) to the leaf \(v_2\).}
\label{fig:shorten-edge}
\end{figure}
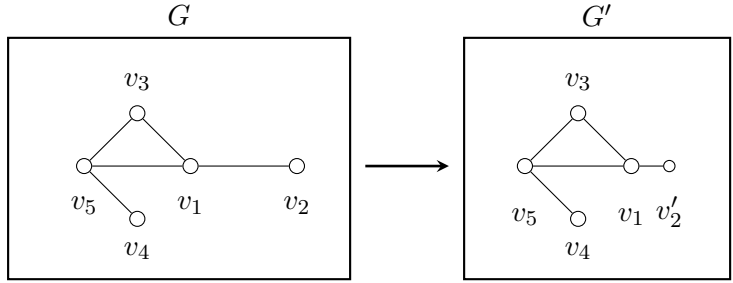

\begin{remark}
At first glance, it may seem plausible that the lemma remains valid when \emph{any} edge is shortened, not necessarily one adjacent to a leaf. However, if this were true, then by Lemma~\ref{constant-factor} below, the critical speed would depend only on the underlying graph structure (i.e., the isomorphism class of the graph) and not on the edge lengths. We find this unlikely.
\end{remark}

\begin{Proof}{\ of Lemma \ref{shorten-edge}}
Let $e$ be an edge in $G$ connecting a vertex $v_1$ and a leaf $v_2$. Without loss of generality, assume that the coordinate $x_e=0$ at $v_1$ and $x_e = L_e$ at $v_2$. Let $G'$ be the graph obtained from $G$ by shortening $e$ to length $L_e' < L_e$, replacing $v_2$ with a new leaf $v_2'$. Let $f : [0,T] \to G$ be a winning strategy for C on $G$.

We define a new strategy $f'$ for $G'$ that mirrors $f$ except when $f(t)$ lies beyond the shortened edge. Formally, define $f'$ by:
\[
f'(t)=
\begin{cases}
f(t), &  f(t) = (\tilde{e},r) \text{ with } \tilde{e} \ne e, \\
f(t), &  f(t) = (e,r) \text{ with } r \le L_e', \\
(e, L_e'), &  f(t) = (e,r) \text{ with } r > L_e'.
\end{cases}
\]

We claim that $f'$ is a winning strategy for $G'$. To see this, let $g'$ be any strategy of R in $G'$. Clearly, $g'$ is also a valid strategy in $G$. (Here we have used the fact that $v_2$ is a leaf.) Because $f$ is a winning strategy in $G$, there exists some $t \in [0,T]$ such that $f(t) = g'(t)$. But since $g'(t) \in G'$, and $f'(t)$ agrees with $f(t)$ wherever $f(t) \in G'$, it follows that $f'(t) = g'(t)$. Thus, C catches R in $G'$ using strategy $f'$.

This construction is illustrated in Figure~\ref{fig:shorten-edge}, which shows how $f(t)$ is projected onto the shortened edge to yield $f'(t)$.
\end{Proof}

\begin{lemma} \label{constant-factor}
Let $G$ be an arbitrary metric graph, and $G'$ be the graph obtained from it by multiplying all edge lengths by some constant $c>0$. Then $s^*(G')=s^*(G)$.
\end{lemma}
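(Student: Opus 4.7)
The plan is to exhibit a speed-preserving bijection between play on $G$ and play on $G'$ via the natural rescaling of space and time, from which equality of the critical speeds follows.

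First I would fix the canonical isometric identification $\phi : G \to G'$, which sends each edge $e$ (identified with $[0,L_e]$) to the corresponding edge $e'$ (identified with $[0,cL_e]$) via $x \mapsto cx$. By construction, for all $p,q \in G$,
\[
d_{G'}(\phi(p),\phi(q)) = c\, d_G(p,q).
\]
Given any cop strategy $f:[0,T]\to G$ of speed at most $s$, I would define the rescaled strategy $f':[0,cT]\to G'$ by
\[
f'(u) := \phi\bigl(f(u/c)\bigr).
\]
For $u_1,u_2 \in [0,cT]$, the scaling of the metric exactly cancels the rescaling of time:
\[
d_{G'}(f'(u_1),f'(u_2)) = c\, d_G(f(u_1/c),f(u_2/c)) \le c\cdot s\cdot |u_1/c - u_2/c| = s|u_1-u_2|,
\]
so $f'$ is again $s$-Lipschitz. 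The same formula defines a bijection on robber strategies, preserving the $1$-Lipschitz property. Moreover $f(t)=g(t)$ in $G$ iff $f'(ct)=g'(ct)$ in $G'$, so $f$ is a winning cop strategy at speed $s$ on $G$ iff $f'$ is a winning cop strategy at speed $s$ on $G'$.

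The map $f \mapsto f'$ is manifestly invertible (apply the same construction with $c$ replaced by $1/c$ from $G'$ to $G$), so the set of speeds $s$ for which a winning cop strategy exists is identical on $G$ and on $G'$. Consequently $s^*(G)=s^*(G')$, and moreover the bijection respects the distinction between the two forms $s_0$ and $s_0^+$, since it preserves the speed value exactly.

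There is no real obstacle here; the only point that needs a moment's attention is the interplay between the metric rescaling on the space and the compensating rescaling of the time parameter, which must be chosen with the right direction so that the Lipschitz constants of both players are preserved rather than altered.
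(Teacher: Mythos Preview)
Your argument is correct and is essentially the same as the paper's: both rescale space by $c$ and time by $c$ so that Lipschitz constants are preserved, yielding a speed-preserving correspondence between strategies on $G$ and $G'$. Your write-up is in fact a bit more detailed, since you explicitly verify the Lipschitz inequality, treat the robber's strategies symmetrically, and note invertibility to conclude equality (including the $s_0$ vs.\ $s_0^+$ distinction), whereas the paper handles only one direction and leaves the rest as ``easy to verify.''
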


\begin{proof}
It suffices to show that, if for some $T$ the function $f:[0,T]\to G$
is a winning strategy for $G$ with Lipschitz constant $s$, then there
exists a winning strategy $f':[0,T']\to G'$ for $G'$. In fact, take
$T'=cT$, and define $f'$ as follows. For $t\ge 0$, put $(e,x)=f(t/c)$, and
let $f'(t)=(e,cx)$. It is easy to verify that $f'$ satisfies the required
property.
\end{proof}

This invariance under uniform scaling justifies our assumption in the introduction of fixing the robber's speed to 1: only the ratio of speeds matters.

\section{Stars}
\label{sec:stars}
\setcounter{equation}{0}
\medskip

It will be more convenient to prove Theorem~\ref{critical-speed-of-star} first, as the proof holds the key to the proof of Theorem~\ref{finiteness-of-critical-speed} in a simpler, more structured setting.

Let \( G \) be a star with center vertex \( O \), and leaves \( v_1, v_2, \ldots, v_k \), connected to \( O \) via edges \( Ov_1, Ov_2, \ldots, Ov_k \) of lengths \( \ell_1, \ldots, \ell_k \), respectively. (Refer again to Figure~\ref{fig:star}.) 
We have to show that $s^*\left(S_k\right)\le (2k-3)^+$, namely, if $s=2k-3+\eps$
for some $\eps>0$, then there exists a winning strategy.

The first step of the cop's strategy is straightforward: the cop travels to the center \( O \), proceeds along the edge \( Ov_k \) to its endpoint, and then returns to \( O \). If the robber is encountered along the way, the game ends. Otherwise, the cop has verified that the robber is not on \( Ov_k \), and may treat this edge as cleared. An edge \( Ov_i \), or more generally, any segment \( [A,B] \) on an edge, is considered \emph{cleared} if the robber is not present at any point $x \in [A, B]$.

Figure~\ref{fig:one-edge-cleaning} illustrates the state of the game before and after the cop’s initial move. All edges are initially unclean, represented by red dashed lines (left part of the figure). After the cop visits the edge \( Ov_k \), it becomes clean, indicated by a solid green line (right part).

\begin{figure}[ht]
\centering
\begin{tikzpicture}[scale=2]

\def\xshift{5cm}

\def\angles{{0,72,144,216,288}}

\node[fill=black, circle, inner sep=1.5pt] (O) at (0,0) {};
\node[fill=black, circle, inner sep=1.5pt] (OR) at (\xshift,0) {};

\foreach \i [evaluate=\i as \angle using {\angles[\i-1]}] in {1,...,5} {
    \node[circle, draw=black, fill=white, inner sep=1.5pt] (v\i) at (\angle:0.9cm) {};
    \draw[thick, dashed, red] (O) -- (v\i);
}

\foreach \i [evaluate=\i as \angle using {\angles[\i-1]}] in {1,...,5} {
    \node[circle, draw=black, fill=white, inner sep=1.5pt] (v\i R) at ([xshift=\xshift] \angle:0.9cm) {};
    \ifnum\i=5
        \draw[thick, green!70!black] (OR) -- (v\i R); 
    \else
        \draw[thick, dashed, red] (OR) -- (v\i R); 
    \fi
}

\node[below] at (O) {$O$};
\node[below] at (OR) {$O$};

\node[above right] at (v1) {$v_1$};
\node[above left] at (v2) {$v_2$};
\node[left] at (v3) {$v_3$};
\node[below left] at (v4) {$v_4$};
\node[below right] at (v5) {$v_5$};

\node[above right] at (v1R) {$v_1$};
\node[above left] at (v2R) {$v_2$};
\node[left] at (v3R) {$v_3$};
\node[below left] at (v4R) {$v_4$};
\node[below right] at (v5R) {$v_5$};

\end{tikzpicture}
\caption{A single edge is cleaned in Phase 1.}
\label{fig:one-edge-cleaning}
\end{figure}
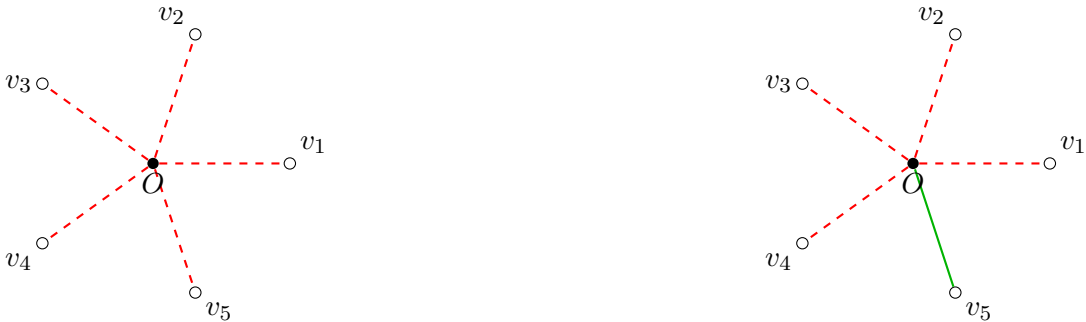

At this point, however, a difficulty arises. We know that the robber is located somewhere on one of the other edges \( Ov_1, \ldots, Ov_{k-1} \), and hence must be at a positive distance from \( O \). However, this distance is unknown and could be arbitrarily small. Therefore, even though the robber cannot be located exactly at \( O \), we cannot assert that any initial segment of these arms has been cleared.

Moreover, any attempt to explore one of the remaining arms — say, \( Ov_1 \) — introduces risk: during the time the cop is away from \( O \), the robber might exploit the opportunity to enter the already-cleared edge \( Ov_k \). Upon the cop’s return, there is no longer any guarantee that \( Ov_k \) remains clean.

The main challenge is thus to explore unknown arms while preserving control over cleared regions. The following lemma provides a key step toward resolving this.

Denote by $\lambda$ the unique positive root of the polynomial
$x^{k-2}+x^{k-3}+\ldots+x-\frac{s-1}{2}$. Since $s>2k-3$, we have
$\lambda>1$.

\begin{lemma}
Let \(s > 2k - 3\). Suppose that the cop C is at the vertex \( O \), and:
\begin{itemize}
\item The edge \(Ov_k\) is fully cleared;
\item For some \(d > 0\), the segment \([O,u_i]\) of edge \(Ov_i\) is cleared for each \(i = 2, \dotsc, k-1\), where
\[ 
|Ou_2| = d, \quad |Ou_3| = (\lambda + 1)d, \quad \dotsc, \quad |Ou_{k-1}| = (\lambda^{k-3} + \cdots + 1)d.
\]
\end{itemize}
Suppose that C now moves at speed \(s\) for \(d/2\) time units along \(Ov_1\) toward \(v_1\), and then for \(d/2\) time units back to \(O\). Then:
\begin{itemize}
\item Either C has intercepted R during this time interval;
\item Or R has remained on the same edge as at the beginning.
\end{itemize}
Moreover, in the second case, the edges \(Ov_i\) for \(i = 1, 3, 4, \dotsc, k - 1\) are cleared up to points \(u'_1, u'_3, u'_4, \dotsc, u'_{k-1}\), satisfying:
\begin{itemize}
\item \( |Ou'_1| = \min\{\lambda \cdot |Ou_{k-1}|,\ |Ov_1|\} \);
\item \( |Ou'_i| = \lambda \cdot |Ou_{i-1}| \) for \(i = 3, 4, \dotsc, k-1\).
\end{itemize}
\end{lemma}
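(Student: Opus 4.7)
The plan is to split the proof into three steps: two algebraic identities, a confinement claim, and the bookkeeping of clearing bounds. The identities are tailored to the definition of $\lambda$: writing $|Ou_i|=(\lambda^{i-2}+\lambda^{i-3}+\cdots+1)d$, a telescoping computation gives
\[
|Ou_i|-d=\lambda\,|Ou_{i-1}|\qquad (i=3,\ldots,k-1),
\]
and the defining equation for $\lambda$ rephrases as $(s-1)d/2=\lambda\,|Ou_{k-1}|$. From $s>2k-3\ge 3$ one also gets $(s-1)d/2>d$, which will be needed to handle the $Ov_1$ case below.

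For the confinement claim I would argue that, absent capture, R must remain on whichever edge he started on. If R starts on $Ov_i$ with $i\in\{2,\ldots,k-1\}$, his initial distance to $O$ strictly exceeds $|Ou_i|\ge d$; being $1$-Lipschitz over a window of length $d$, he cannot reach $O$, and since $v_i$ is a leaf he cannot leave $Ov_i$ at all. If R starts on $Ov_1$ at distance $r_0>0$, I would set up the continuous function $\phi(t)=r(t)-f(t)$ comparing R's and C's positions on $Ov_1$: $\phi(0)=r_0>0$, and if R ever reached $O$ at some time $t^*\in(0,d)$, then $\phi(t^*)=-f(t^*)<0$ (since $f>0$ strictly inside the window), so by the intermediate value theorem C captures R somewhere in $(0,t^*)$, contradicting the no-capture assumption. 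Hence R stays on $Ov_1$.

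Granted this confinement, the clearing bounds come almost for free. For $i\ge 3$, a confined R on $Ov_i$ satisfies $r(d)>|Ou_i|-d=\lambda\,|Ou_{i-1}|$, giving $|Ou'_i|=\lambda\,|Ou_{i-1}|$. For $i=1$, the same IVT argument run over all of $[0,d]$ shows $r(t)>f(t)$ throughout, so $r$ must strictly exceed $f$'s maximum value $\min\{sd/2,|Ov_1|\}$ at the corresponding time. If that maximum equals $|Ov_1|$, then $r>|Ov_1|$ is impossible, so R must in fact have been captured, making all of $Ov_1$ clear; otherwise $r(d/2)>sd/2$, and the $1$-Lipschitz bound yields $r(d)>(s-1)d/2=\lambda\,|Ou_{k-1}|$. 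These two subcases combine into the stated $|Ou'_1|=\min\{\lambda\,|Ou_{k-1}|,|Ov_1|\}$.

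The one genuinely delicate step is the IVT argument on $Ov_1$ and its interplay with whether $|Ov_1|$ is longer or shorter than the sweep depth $sd/2$; everything else is direct $1$-Lipschitz arithmetic against the two identities established at the outset.
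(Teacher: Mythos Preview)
Your proof is correct and follows essentially the same approach as the paper: case-split on which edge R occupies, use the $1$-Lipschitz bound to subtract $d$ from the initial cleared depths, and for the $Ov_1$ case exploit that R must lie beyond C's turnaround depth $sd/2$ at time $d/2$. Your organization is somewhat more explicit than the paper's---you isolate the two identities $|Ou_i|-d=\lambda|Ou_{i-1}|$ and $(s-1)d/2=\lambda|Ou_{k-1}|$ at the outset, and you spell out the intermediate-value argument on $Ov_1$ where the paper simply asserts that an uncaptured R must have been beyond $sd/2$---but the substance is the same. One small remark: your claimed inequality $(s-1)d/2>d$ is true but is never actually used in your argument, and in the range $(s-1)d/2<|Ov_1|\le sd/2$ your case split proves more than the stated $\min$ formula (all of $Ov_1$ clear rather than just up to $(s-1)d/2$); since the lemma only requires a lower bound on the cleared depth, this is harmless.
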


\begin{remark}
    Note that our assumptions and conclusions ``miss'' some edges. In the beginning, R may be on \( Ov_1 \), arbitrarily close to \( O \). In the end, R may be on \( Ov_2 \), arbitrarily close to \( O \). The lemma also implies that \( Ov_k \) is clean at the end of the iteration.
\end{remark}

\begin{remark} \label{Ov1 too short}
    The lemma does not assume that the cop is aware of which portions of edges are cleared. Rather, it states that if certain segments are in fact clear initially, then further segments are guaranteed to be cleared after the described motion. This distinction will be important in the iterative construction later.

If \( |Ov_1| < sd/2, \) then C reaches \( v_1 \) within less than \( d/2 \) time units. In this case, the way back starts earlier, and the lower bounds on R's distance from \( O \), depending on his location at the beginning of the iteration, are even larger than stated in the lemma.
\end{remark}

Figure~\ref{fig:one-iteration-expansion} visualizes the initial and resulting configurations described in the lemma: cleared segments and key points are marked according to the lemma's assumptions and conclusions.

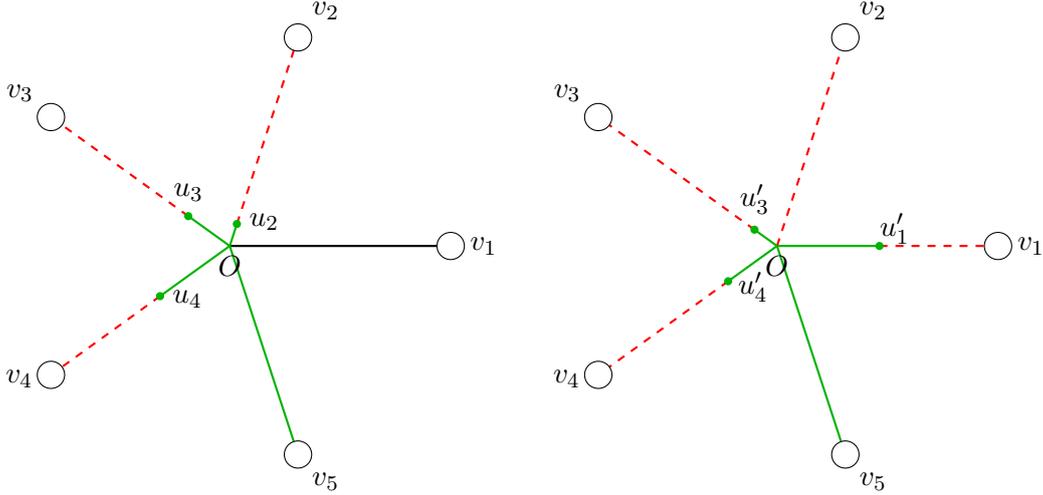
\begin{figure}
\centering
\begin{tikzpicture}[scale=1.8]
\def\rnode{0.18}
\def\edgelen{1.7}
\def\labelfac{1.03}
\def\vertexfac{0.95}

\coordinate (Oleft) at (-2,0);
\coordinate (Oright) at (2,0);


\coordinate (Lv1end) at ($(Oleft)+(0:\edgelen)$);
\coordinate (Lv2end) at ($(Oleft)+(72:\edgelen)$);
\coordinate (Lv3end) at ($(Oleft)+(144:\edgelen)$);
\coordinate (Lv4end) at ($(Oleft)+(216:\edgelen)$);
\coordinate (Lv5end) at ($(Oleft)+(288:\edgelen)$);

\coordinate (u2) at ($(Oleft)!0.1!(Lv2end)$);
\coordinate (u3) at ($(Oleft)!0.22!(Lv3end)$);
\coordinate (u4) at ($(Oleft)!0.37!(Lv4end)$);

\coordinate (Lv1) at ($(Oleft)+(0:{\vertexfac*\edgelen})$);
\coordinate (Lv2) at ($(Oleft)+(72:{\vertexfac*\edgelen})$);
\coordinate (Lv3) at ($(Oleft)+(144:{\vertexfac*\edgelen})$);
\coordinate (Lv4) at ($(Oleft)+(216:{\vertexfac*\edgelen})$);
\coordinate (Lv5) at ($(Oleft)+(288:{\vertexfac*\edgelen})$);

\draw[thick] (Oleft) -- ($(Oleft)+(0:{\edgelen-\rnode})$);
\draw[thick, green!70!black] (Oleft) -- (u2);
\draw[thick, red, dashed] (u2) -- ($(Oleft)+(72:{\edgelen-\rnode})$);
\draw[thick, green!70!black] (Oleft) -- (u3);
\draw[thick, red, dashed] (u3) -- ($(Oleft)+(144:{\edgelen-\rnode})$);
\draw[thick, green!70!black] (Oleft) -- (u4);
\draw[thick, red, dashed] (u4) -- ($(Oleft)+(216:{\edgelen-\rnode})$);
\draw[thick, green!70!black] (Oleft) -- ($(Oleft)+(288:{\edgelen-\rnode})$);

\foreach \v in {Lv1,Lv2,Lv3,Lv4,Lv5}
    \node[circle, draw=black, fill=white, minimum size=2*\rnode cm, inner sep=0pt] at (\v) {};

\node[below] at (Oleft) {$O$};
\node[right=1pt] at ($(Oleft)!{\labelfac}!(Lv1)$) {$v_1$};
\node[above right=1pt] at ($(Oleft)!{\labelfac}!(Lv2)$) {$v_2$};
\node[above left=1pt] at ($(Oleft)!{\labelfac}!(Lv3)$) {$v_3$};
\node[left=1pt] at ($(Oleft)!{\labelfac}!(Lv4)$) {$v_4$};
\node[below right=1pt] at ($(Oleft)!{\labelfac}!(Lv5)$) {$v_5$};

\fill[green!70!black] (u2) circle (0.03cm);
\fill[green!70!black] (u3) circle (0.03cm);
\fill[green!70!black] (u4) circle (0.03cm);

\node at ($(u2)+(0.2,0.02)$) {$u_2$};
\node[above=2pt] at (u3) {$u_3$};
\node at ($(u4)+(0.2,-0.02)$) {$u_4$};


\coordinate (Rv1end) at ($(Oright)+(0:\edgelen)$);
\coordinate (Rv2end) at ($(Oright)+(72:\edgelen)$);
\coordinate (Rv3end) at ($(Oright)+(144:\edgelen)$);
\coordinate (Rv4end) at ($(Oright)+(216:\edgelen)$);
\coordinate (Rv5end) at ($(Oright)+(288:\edgelen)$);

\coordinate (ur1) at ($(Oright)!0.44!(Rv1end)$);
\coordinate (ur3) at ($(Oright)!0.12!(Rv3end)$);
\coordinate (ur4) at ($(Oright)!0.26!(Rv4end)$);

\coordinate (Rv1) at ($(Oright)+(0:{\vertexfac*\edgelen})$);
\coordinate (Rv2) at ($(Oright)+(72:{\vertexfac*\edgelen})$);
\coordinate (Rv3) at ($(Oright)+(144:{\vertexfac*\edgelen})$);
\coordinate (Rv4) at ($(Oright)+(216:{\vertexfac*\edgelen})$);
\coordinate (Rv5) at ($(Oright)+(288:{\vertexfac*\edgelen})$);

\draw[thick, green!70!black] (Oright) -- (ur1);
\draw[thick, red, dashed] (ur1) -- ($(Oright)+(0:{\edgelen-\rnode})$);
\draw[thick, red, dashed] (Oright) -- ($(Oright)+(72:{\edgelen-\rnode})$);
\draw[thick, green!70!black] (Oright) -- (ur3);
\draw[thick, red, dashed] (ur3) -- ($(Oright)+(144:{\edgelen-\rnode})$);
\draw[thick, green!70!black] (Oright) -- (ur4);
\draw[thick, red, dashed] (ur4) -- ($(Oright)+(216:{\edgelen-\rnode})$);
\draw[thick, green!70!black] (Oright) -- ($(Oright)+(288:{\edgelen-\rnode})$);

\foreach \v in {Rv1,Rv2,Rv3,Rv4,Rv5}
    \node[circle, draw=black, fill=white, minimum size=2*\rnode cm, inner sep=0pt] at (\v) {};

\node[below] at (Oright) {$O$};
\node[right=1pt] at ($(Oright)!{\labelfac}!(Rv1)$) {$v_1$};
\node[above right=1pt] at ($(Oright)!{\labelfac}!(Rv2)$) {$v_2$};
\node[above left=1pt] at ($(Oright)!{\labelfac}!(Rv3)$) {$v_3$};
\node[left=1pt] at ($(Oright)!{\labelfac}!(Rv4)$) {$v_4$};
\node[below right=1pt] at ($(Oright)!{\labelfac}!(Rv5)$) {$v_5$};

\fill[green!70!black] (ur1) circle (0.03cm);
\fill[green!70!black] (ur3) circle (0.03cm);
\fill[green!70!black] (ur4) circle (0.03cm);

\node at ($(ur1)+(0.11,0.13)$) {$u'_1$};     
\node[above=2pt] at (ur3) {$u'_3$};
\node at ($(ur4)+(0.18,-0.04)$) {$u'_4$};
\end{tikzpicture}
\caption{Expansion of free zones during one iteration.}
\label{fig:one-iteration-expansion}
\end{figure}

The proof of the lemma is straightforward. Let the cop's position at time $t$ be denoted by $f(t) \in X_G$. Suppose the current iteration begins at time $t_0$ with $f(t_0)=O$. The cop moves along $Ov_1$ towards $v_1$ at speed $s$ for $d/2$ time units and then returns. As C devotes $d$ time units to explore $Ov_1$, it is possible for R to reduce his distance from $O$ by $d$ distance units. Thus, if at the beginning of the iteration R was:
\begin{itemize}
    \item on $Ov_2$ -- his final distance from $O$ will be guaranteed to be larger only than $d-d=0$ by the end;
    \item on $Ov_3$ -- his final distance from $O$ will be guaranteed to be larger only than $(\lambda+1)d-d=\lambda d=\lambda\cdot |Ou_2|$ by the end;
    \item ...
    \item on $Ov_{k-1}$ -- his final distance from $O$ will be larger only than
    \[
    (\lambda^{k-3}+ \cdots +\lambda+1)d-d=(\lambda^{k-3} + \cdots +\lambda) d = \lambda |Ou_{k-2}|
    \]
    by the end.
\end{itemize}
In all these cases, R's distance from $O$ may have gone down during the iteration. However, if R was on $Ov_1$ and has not been caught during the time C entered this edge, then after $d/2$ time units he was at a distance larger than $ds/2$ from $O$, and by the end of the iteration at a distance larger than
\[
\frac{d}{2}\cdot s - \frac{d}{2}\cdot 1 =
\frac{s-1}{2}\cdot d = \left(\lambda^{k-2} + \lambda^{k-3} + \cdots + \lambda \right) d 
=\lambda |Ou_{k-1}|.
\]
As mentioned in Remark~\ref{Ov1 too short}, if R gets to \( v_1 \) within less than \( d/2 \) time units, then the edge \( Ov_1 \) will be clear by the end of the iteration. This proves the lemma.

We return to the proof of the theorem. Suppose now that at some point the cop is located at $O$, and we
somehow know that each of the edges
$Ov_2, Ov_3,\ldots, Ov_{k-1}$ is free from $O$ up to some point at a
positive distance from $O$ (thus, no assumption is made regarding $Ov_1$;
recall that $Ov_k$ is completely free). Specifically, assume that, for a
certain $d>0$, the edge $Ov_2$ is free up to $u_2$ where $|Ou_2|=d$, the
edge $Ov_3$ is free up to $u_3$ where $|Ou_3|=(\lambda+1) d$, and so forth, up
to the edge $Ov_{k-1}$, known to be free up to $u_{k-1}$ where
$|Ou_{k-1}|=(\lambda^{k-3}+\lambda^{k-4}+\ldots+1) d$.
Send the cop from $O$ towards $v_1$ at full speed for $d/2$ time
units, so that he gets up to some point $u'_1$ along this edge whose
distance from $O$ is $sd/2$, and then return to $O$ at the same speed.
We claim first that the
robber could not have used the cop's absence from $O$ to get there
(and from there perhaps to another edge than the one he was on at the
beginning of the cop's travel). Indeed, if at the beginning of the
stage the robber was somewhere along $Ov_2$, then within the $d$ time
units the cop spent along $Ov_1$ the robber could have reduced his
distance from $O$ by at most $d$, and hence could not have reached $O$.
Since on the other edges the free zones were larger, the situation is even
better if he was located along another edge at the beginning of the stage.
Moreover, while we have no positive lower bound on his distance from $O$ if
he is along $Ov_2$, we do have such bounds regarding the edges $Ov_1$ and
$Ov_3, Ov_4,\ldots, Ov_{k-1}$. Indeed, if he is along $Ov_1$ (and has not
been caught there during this stage), then he was at a distance exceeding
$sd/2$ from $O$ at the time the cop was at $u'_1$, so that by the end
of the stage his distance is larger than $(s-1)d/2$. If he was along $Ov_i$
for some $3\le i\le k-1$, then since his initial distance from $O$ was
larger than $(\lambda^{i-2}+\lambda^{i-3}+\ldots+1) d$, and it could have
reduced by at most $d$, it is at the end of this stage larger than
$(\lambda^{i-2}+\lambda^{i-3}+\ldots+\lambda) d$. Thus, comparing the sizes
of the free zones now along the edges $Ov_3,Ov_4,\ldots,Ov_{k-1},Ov_1$, to
the sizes of the free zones at the beginning of the stage along the edges
$Ov_2,Ov_3,\ldots,Ov_{k-1}$, respectively, we observe that all sizes
increased by a factor of $\lambda$. Consequently, within a finite number of
stages all edges but one will be free. We now send the cop to the
endpoint of this edge and thus ensure catching the robber. Note that
whenever in the course of this process the cop gets to the end of an
edge while
there are still some edges not totally freed, he just returns to $O$.

We still need to explain how we get to the situation whereby we know that
there are $k-2$ edges, not including $Ov_k$, with a free zone of positive
length. 
The idea is that, in fact, after the cop has finished histrip from
$v_k$ to $O$, all other edges do have some intervals of positive length
where the robber is not located; the only problem is that we do not know
how large these intervals are. Hence we do the procedure described above
``starting'' with infinitely small intervals. More precisely, we have seen
above that the cop spends along each edge $\lambda$ times as much
time as he did along the edge he has visited just prior to this edge. Thus
the cop will go along the edges $Ov_1, Ov_2,\ldots,Ov_{k-1}$
periodically, for time durations of $\ldots,
\lambda^{-3},\lambda^{-2},\lambda^{-1},1$. Formally (measuring time only
after the cop has finished the trip from $v_k$ to $O$), for each
positive integer $m$ and each integer $i$ between $1$ and $k-1$, the
cop spends half the time interval
$$I_{m,i}=[\lambda^{-m(k-1)+i}\cdot\frac{1}{\lambda-1},
           \lambda^{-m(k-1)+i}\cdot\frac{\lambda}{\lambda-1}]$$
to travel from $O$ towards $v_i$ at full speed and the second half to
return.

At the time these trips towards the $v_i$'s, $1\le i\le k-1$, and
backwards begin, the
robber must be along one of the edges $Ov_i$ at some positive distance
from $O$. Consequently, for sufficiently large $m$, it is the case that by
time $\lambda^{-m(k-1)}\cdot\frac{\lambda}{\lambda-1}$ the robber cannot
possibly get to the vertex $O$. Moreover, if he was initially along $Ov_i$, he cannot
get by that time closer to $O$ than $\lambda^{-m(k-1)+i}$. Hence the
suggested path is winning.

\section{Other Proofs} \label{sec:proofs}

\begin{Proof}{\ of Theorem \ref{finiteness-of-critical-speed}}
Let $(G,\boldsymbol{\ell})$ be a finite connected metric graph, where each edge
$e$ has length $\ell_e>0$. Write
\[
\Lambda := \sum_{e\in E(G)} \ell_e
\]
for the total length of the graph.

It is a classical fact that from any starting point $x\in G$ there exists a
walk of total length at most $2\Lambda$ that traverses every edge of $G$.  This
follows from the standard ``double-tree'' argument: choosing any spanning tree
$T$ of $G$, a depth–first traversal of $T$ walks each tree-edge at most twice,
and any non-tree edge can similarly be incorporated through a detour traversed
twice.  Hence the total length of such a walk never exceeds $2\Lambda$.  (See,
e.g., \cite[§1.8]{Diestel} for background on Euler tours and double-tree
walks.) Consequently, we may take
\( L = 2\Lambda,\)
so that, regardless of the starting point, there always exists a walk of length
at most $L$ that traverses the entire graph.

Employing the idea of the construction in the proof of Theorem
\ref{critical-speed-of-star}, we see that, starting from an arbitrary
vertex $v$, we can guarantee after some finite time $T_v$ that the
robber is not within a distance of some $\eps=\eps_v$ from $v$. Indeed,
we mimic the construction there, but only as long as the cop does not
get to any vertex neighboring $v$. Note that, if the degree of $v$ is $k$,
then a speed just a little larger than $2k-3$ will not suffice. Indeed, the
principal part of the process in the proof for star graphs started after
one edge was known to be free, which we cannot ensure now. Also, in that
proof we repeatedly had an edge for which there was no lower bound on the
size of the safe zone, whereas here need lower bounds on the distance from $v$ for all edges incident to this vertex. However, a speed larger than $2k+1$ will certainly
do.

First design for the cop a path of length at most $L$ traversing the
graph. When walking along this path, each time he gets to a vertex $v$
for the first time, he secures as above a region of radius $\eps_v$ around
$v$. The time required for this is at most $L/s+\sum_{v\in V} T_v$.
Clearly, this time may be made arbitrarily small by making $s$ sufficiently
large. In particular, we may ensure that the robber could not get to any
vertex between the time the cop started securing its vicinity and the
time the process ended.

Thus, if $s$ selected to be sufficiently large, by the end of the above
process we can get a lower bound of the distance of the robber from the
nearest vertex.

Now send the cop again along a path of length at most $L$ traversing
the graph. If $s$ is sufficiently large, then the robber will not be
able to reach any vertex during this stage, and thus the cop is bound
to catch him. This proves the theorem.
\end{Proof}

\begin{Proof}{\ of Proposition~\ref{complex-graph-with-small-critical-speed}}
Fix vertex labels as shown in Figure~\ref{fig:comb-labeled}.

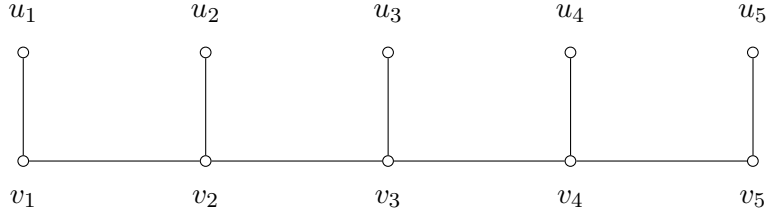
\begin{figure}[ht]
\centering
\begin{tikzpicture}[scale=1.2, every node/.style={circle, draw, fill=white, inner sep=0pt, minimum size=4pt}]
    \node (v1) at (0,0) {};
    \node (v2) at (2,0) {};
    \node (v3) at (4,0) {};
    \node (v4) at (6,0) {};
    \node (v5) at (8,0) {};
    
    \node[below=6pt of v1, draw=none, fill=none] {$v_1$};
    \node[below=6pt of v2, draw=none, fill=none] {$v_2$};
    \node[below=6pt of v3, draw=none, fill=none] {$v_3$};
    \node[below=6pt of v4, draw=none, fill=none] {$v_4$};
    \node[below=6pt of v5, draw=none, fill=none] {$v_5$};
    
    \draw (v1) -- (v2) -- (v3) -- (v4) -- (v5);
    
    \node (u1) at (0,1.2) {};
    \node (u2) at (2,1.2) {};
    \node (u3) at (4,1.2) {};
    \node (u4) at (6,1.2) {};
    \node (u5) at (8,1.2) {};
    
    \node[above=6pt of u1, draw=none, fill=none] {$u_1$};
    \node[above=6pt of u2, draw=none, fill=none] {$u_2$};
    \node[above=6pt of u3, draw=none, fill=none] {$u_3$};
    \node[above=6pt of u4, draw=none, fill=none] {$u_4$};
    \node[above=6pt of u5, draw=none, fill=none] {$u_5$};
    
    \draw (v1) -- (u1);
    \draw (v2) -- (u2);
    \draw (v3) -- (u3);
    \draw (v4) -- (u4);
    \draw (v5) -- (u5);
\end{tikzpicture}
\caption{Comb graph $B_5$ with backbone vertices $v_1,\dots,v_5$ and leaves $u_1,\dots,u_5$.}
\label{fig:comb-labeled}
\end{figure}

Let $s>3$. The cop goes first to $u_1$ and then to $v_2$ via $v_1$. The path from $u_1$ to $v_2$ is now clear. Upon reaching $v_2$, the cop faces exactly the same local situation as in the
$3$-star: one incident edge ($v_2v_1$) is known to be clear, while the two edges
$v_2u_2$ and $v_2v_3$ are suspected. Since C's speed exceeds 3, as in the star
case, the cop performs an infinite schedule of geometrically increasing
excursions into the two suspected edges. Since both suspected edges have the same length, he can clear $v_2u_2$ without giving R a chance to sneak from $v_2v_3$ to $v_1v_2$. After clearing $v_2u_2$, he goes to $v_3$. He continues similarly, performing $3$-star clearings at each $v_i$.

Therefore the cop has a winning strategy for every $s>3$, and hence
$s^*(B_k,\boldsymbol{\ell}) \le 3^+$.
\end{Proof}

\begin{Proof}{\ of Theorem\ \ref{optimal-strategy-exists}}
Denote $T_0=\inf\T$. Then there exists a sequence
$(f_n)_{n=1}^\infty$ of winning strategies,
$$f_n:[0,T_0+\eps_n]\longto X_G, \qquad n=1,2,\ldots,$$
where $\eps_n>0$ for each $n$ and $\eps_n\xrightarrow[n\to\infty]{} 0.$ Since all the $f_n$ are $s$-Lipschitz, the sequence has a
pointwise convergent subsequence (defined on $[0,T_0]$).
Replacing the original sequence by this subsequence, we may
assume that $f_n\xrightarrow[n\to\infty]{} f$ pointwise. Clearly,
$f$ is also $s$-Lipschitz. It remains to show that $f$ forms a
winning strategy.

Let $g$ be a strategy of the robber. Since the $f_n$ are
winning, for each $n$ there exists a point
$t_n\in[0,T_0+\eps_n]$ such that
\begin{equation}\label{intersection-point}
f(t_n)=g(t_n).
\end{equation}
Passing again to a subsequence, we may assume that the
sequence $(t_n)_{n=1}^\infty$ is convergent, say
$t_n\xrightarrow[n\to\infty]{} t_0$. Note that $t\in [0,T_0]$. Since
the $f_n$ are uniformly Lipschitz, we have
$$f_n(t_n)\xrightarrow[n\to\infty]{} f(t_0).$$
Since $g$ is continuous:
$$g(t_n)\xrightarrow[n\to\infty]{} g(t_0).$$
By (\ref{intersection-point}), the last two convergence
properties imply that $f(t_0)=g(t_0)$, which completes the
proof.
\end{Proof}

\begin{Proof}{\ of Theorem\ \ref{max-speed}}
Let $T_0=\min\T$, and suppose there exists an optimal strategy
$f$ whose total variation is $V_f<sT_0$. For $t\in [0,T_0]$,
denote by $V_f(t)$ the total variation of the restriction of
$f$ to the interval $[0,t]$. Define
$\tilde{f}:[0,V_f/s]\longto X_{G}$ by:
$$\tilde{f}(t)=f\left(V_f^{-1}(st)\right), \qquad 0\le t\le
V_f/s.$$
In fact, as $V_f(t)$ is non-decreasing as a function of $t$ but
may perhaps be not strictly increasing, $V_f^{-1}(st)$ may be
undefined. However, the value of $f$ at all points up to which
the variation of $f$ assumes some value is the same, so that
$\tilde{f}$ is indeed well defined.

Now we claim that $\tilde{f}$ is also $s$-Lipschitz, and
therefore a possible strategy for the cop. In fact, for
$0\le t_1<t_2\le V_f/s$, consider the distance in $X_G$
between the points $\tilde{f}(t_1)=f\left(V_f^{-1}(st_1)\right)$ and
$\tilde{f}(t_2)=f\left(V_f^{-1}(st_2)\right)$. This is the distance between the
points on the cop's path to which he arrived after
traveling accumulated distances of $st_1$ and $st_2$,
respectively. Hence the distance between these two points is
at most $s(t_2-t_1)$.

It remains to show that $\tilde{f}$ is a winning strategy. Let
$\tilde{g}$ be any strategy of the robber, namely a
$1$-Lipschitz function from $[0,V_f/s]$ to $X_G$. Define another
strategy $g$ for the robber by:
$$g(t)=\tilde{g}(V_f(t)/s), \qquad 0\le t\le T_0.$$
This is also an admissible strategy for the robber since
for $0\le t_1<t_2\le T_0$:
$$\begin{array}{lll}
    d(g(t_1),g(t_2)) & = & d(\tilde{g}(V_f(t_1)/s),
\tilde{g}(V_f(t_2)/s)\\
                     & \le & \left|V_f(t_2)/s-V_f(t_1)/s\right|
                       =\frac{1}{s}\left|V_f(t_2)-V_f(t_1)\right|\\
                     & \le & \frac{1}{s} (st_2-st_1)=t_2-t_1.
  \end{array}$$
Since $f$ is a winning strategy, there exists some $t_0\in
[0,T_0]$ for which $f(t_0)=g(t_0)$. Hence,
$$\tilde{f}(V_f(t_0)/s)=f\left(V_f^{-1}\left(V_f(t_0)/s\right)\cdot
s\right)=f(t_0)=g(t_0)=\tilde{g}(V_f(t_0)/s),$$
namely, $\tilde{f}$ intersects $\tilde{g}$.
\end{Proof}

\bibliographystyle{plainurl}

\begin{thebibliography}{10}

\bibitem{aigner}
Martin Aigner and Michael Fromme.
\newblock A game of cops and robbers.
\newblock {\em Discrete Applied Mathematics}, 8(1):1--12, 1984.
\newblock \href {https://doi.org/10.1016/0166-218X(84)90073-8}
  {\path{doi:10.1016/0166-218X(84)90073-8}}.

\bibitem{alspach}
Brian Alspach.
\newblock Searching and sweeping graphs: A brief survey.
\newblock {\em Le Matematiche}, 59(1-2):5--37, 2004.
\newblock Available online.
\newblock URL:
  \url{https://lematematiche.dmi.unict.it/index.php/lematematiche/article/view/160}.

\bibitem{bonato_nowakowski_book}
Anthony Bonato and Richard~J. Nowakowski.
\newblock {\em The Game of Cops and Robbers on Graphs}, volume~61 of {\em
  Student Mathematical Library}.
\newblock American Mathematical Society, 2011.
\newblock URL: \url{https://doi.org/10.1090/stml/061}.

\bibitem{Diestel}
Reinhard Diestel.
\newblock {\em Graph Theory}.
\newblock Springer, 5 edition, 2017.
\newblock URL: \url{https://link.springer.com/book/10.1007/978-3-662-53622-3},
  \href {https://doi.org/10.1007/978-3-662-53622-3}
  {\path{doi:10.1007/978-3-662-53622-3}}.

\bibitem{gardner1983}
Martin Gardner.
\newblock {\em Wheels, Life and Other Mathematical Amusements}.
\newblock W. H. Freeman, 1983.
\newblock Includes discussions of pursuit problems and combinatorial games.
\newblock URL: \url{https://www.worldcat.org/title/9288622}.

\bibitem{gardner1994}
Martin Gardner.
\newblock {\em My Best Mathematical and Logic Puzzles}.
\newblock Dover Publications, 1994.
\newblock Collection of puzzles including early pursuit-evasion themes.
\newblock URL: \url{https://www.worldcat.org/title/29430266}.

\bibitem{isaacs}
Rufus Isaacs.
\newblock {\em Differential Games: A Mathematical Theory with Applications to
  Warfare and Pursuit, Control and Optimization}.
\newblock Dover Publications, 1999.
\newblock Originally published in 1965 by John Wiley \& Sons.
\newblock URL:
  \url{https://books.google.com/books/about/Differential_Games.html?id=XIxmMyIQgm0C}.

\bibitem{isler2008role}
Volkan Isler and Nikhil Karnad.
\newblock The role of information in the cop-robber game.
\newblock {\em Theoretical Computer Science}, 399(3):179--190, 2008.
\newblock URL:
  \url{https://experts.umn.edu/en/publications/the-role-of-information-in-the-cop-robber-game}.

\bibitem{nowakowski}
Richard Nowakowski and Peter Winkler.
\newblock Vertex-to-vertex pursuit in a graph.
\newblock {\em Discrete Mathematics}, 43(2-3):235--239, 1983.
\newblock URL: \url{https://doi.org/10.1016/0012-365x(83)90160-7}.

\bibitem{quilliot-thesis}
Alain Quilliot.
\newblock {\em Jeux et pointes fixes sur les graphes}.
\newblock Thèse de doctorat, Université de Paris VI, Paris, France, 1978.

\bibitem{quilliot}
Alain Quilliot.
\newblock A short note about pursuit games played on a graph with a given
  genus.
\newblock {\em Journal of Combinatorial Theory, Series B}, 38(1):89--92, 1985.
\newblock URL: \url{https://doi.org/10.1016/0095-8956(85)90093-0}.

\bibitem{ryll1962}
Czes{\l}aw Ryll-Nardzewski.
\newblock A theory of pursuit and evasion.
\newblock Technical Report RM-2971-PR, RAND Corporation, 1962.
\newblock Available online at RAND Corporation.
\newblock URL: \url{https://www.rand.org/pubs/papers/P2606.html}.

\bibitem{seager}
Suzanne Seager.
\newblock Locating a robber on a graph.
\newblock {\em Discrete Mathematics}, 312(2):3265--3269, 2012.
\newblock URL: \url{https://doi.org/10.1016/j.disc.2012.07.029}.

\bibitem{wolframCentipedeGraph}
Eric~W. Weisstein.
\newblock Centipede graph.
\newblock \url{https://mathworld.wolfram.com/CentipedeGraph.html}, 2024.
\newblock From \emph{MathWorld}--A Wolfram Web Resource.

\end{thebibliography}

\bigskip

\end{document}